\documentclass[12pt, reqno]{amsart}
\usepackage{fullpage,url,amssymb}
\usepackage[backref]{hyperref}
\usepackage[alphabetic,backrefs,lite]{amsrefs}


\usepackage{color}


\newtheorem{lemma}{Lemma}[section]

\newtheorem{claim*}{Claim}

\newtheorem{thm}[lemma]{Theorem}


\newcommand{\Q}{{\mathbb Q}}

\newcommand{\Z}{{\mathbb Z}}

\newcommand{\pp}{{\mathfrak p}}


\newcommand{\OO}{{\mathcal O}}


\DeclareMathOperator{\inv}{inv}

\DeclareMathOperator{\Br}{Br}

\DeclareMathOperator{\Sym}{Sym}

\DeclareMathOperator{\ev}{ev}
\DeclareMathOperator{\Az}{Az}
\DeclareMathOperator{\PGL}{PGL}


\numberwithin{equation}{section}
\numberwithin{table}{section}

\newcommand{\defi}[1]{\textsf{#1}} 

\title[Failure of the Hasse principle]{Failure of the Hasse principle for Ch\^atelet surfaces in characteristic $2$}

\author[B. Viray]{Bianca Viray}

\address{Department of Mathematics, University of California, Berkeley, CA 94720-3840, USA}

\email{bviray@math.berkeley.edu}

\urladdr{http://math.berkeley.edu/~bviray}

\date{}

\thanks{This research was supported by the Mentored Research Award from UC Berkeley}

\begin{document}

\begin{abstract}
Given any global field $k$ of characteristic $2$, we construct a Ch\^atelet surface over $k$ that fails to satisfy the Hasse principle.  This failure is due to a Brauer-Manin obstruction. This construction extends a result of Poonen to characteristic $2$, thereby showing that the \'etale-Brauer obstruction is insufficient to explain all failures of the Hasse principle over a global field of any characteristic.
\end{abstract}

\subjclass[2000]{Primary 11G35; Secondary 14G05, 14G25, 14G40}

\keywords{Hasse principle, Brauer-Manin obstruction, Ch\^atelet surface, rational points}

\maketitle

\section{Introduction}
Poonen recently showed that, for a global field $k$ of characteristic different from $2$, the \'etale-Brauer obstruction is insufficient to explain failures of the Hasse principle~\cite{poonen-insufficiency}.  This result relied on the existence of a Ch\^atelet surface over $k$ that violates the Hasse principle~\cite[Prop 5.1 and \S 11]{poonen-chatelet}.  Poonen's construction fails in characteristic $2$ due to the inseparability of $y^2-az^2$.  

Classically, Ch\^atelet surfaces have only been studied over fields of characteristic different from $2$.  In this paper, we define a Ch\^atelet surface over fields of characteristic $2$ and obtain a result analogous to~\cite[Prop 5.1]{poonen-chatelet}.
\begin{thm}\label{thm:HPcounterex}
	Let $k$ be any global field of characteristic $2$.  There exists a Ch\^atelet surface $X$ over $k$ that violates the Hasse principle.
\end{thm}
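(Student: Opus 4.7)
The plan is to mimic the construction of \cite[Prop.~5.1]{poonen-chatelet} in characteristic $2$, replacing the binary form $y^2 - az^2$ (which is inseparable when $\Char k = 2$) with a non-degenerate Artin-Schreier form. Accordingly, the first task is to define, over a global field $k$ of characteristic $2$, a Ch\^atelet surface $X$ as a smooth projective model of the affine surface
$$y^2 + yz + az^2 = f(x),$$
where $a \in k$ makes $T^2 + T + a$ irreducible and $f \in k[x]$ is separable of degree $4$. This is naturally a conic bundle $\pi \colon X \to \PP^1$ inside a $\PP^2$-bundle over $\PP^1$, and the combination of separability of $f$ with non-degeneracy of the form $y^2 + yz + az^2$ (Arf invariant $a$) ensures smoothness.

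With $X$ in hand, I would produce a Brauer-Manin obstruction from the degree-$2$ cyclic algebra $\calA = [a, f(x)) \in \Br k(X)$ attached to the Artin-Schreier extension $k(\wp^{-1}(a))/k$ (with $\wp(T) = T^2 + T$) and the rational function $f(x)$. On the smooth fibers of $\pi$ the element $f(x)$ is a norm from the quadratic extension, so $\calA$ is locally trivial on those fibers, and a local analysis at the bad fibers shows $\calA$ in fact lies in $\Br X$. Following Poonen's recipe, the idea is then to choose $k$, $a$, and a factorization $f = c f_1 f_2$ so that at every place $v$ of $k$:
\begin{enumerate}
\item[(i)] $X(k_v) \neq \emptyset$;
\item[(ii)] the local invariant $\inv_v \calA(P_v) \in \tfrac12 \Z / \Z$ is constant on $X(k_v)$; and
\item[(iii)] the sum $\sum_v \inv_v \calA(P_v)$ equals $\tfrac12$.
\end{enumerate}
Condition (ii) is arranged so that at each $v$ either $a \in \wp(k_v)$ (making $\calA$ locally split) or the symbol $[a, f_i(P_v))_v$ is pinned down by the reductions of $a$ and $f_i$ modulo a uniformizer.

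Concretely, for $k = \F_q(t)$ with $q$ a power of $2$, I would take $a \in \F_q$ with $a \notin \wp(\F_q)$ (such an $a$ exists because $\wp \colon \F_q \to \F_q$ has image of index $2$) and construct $f_1, f_2 \in k[x]$ of degree $2$ whose values can be forced into $\wp(k_v)$ or its complement at each place. Local solvability at all but finitely many places reduces to finding a smooth point on the reduction of $X$, which is supplied by Lang-Weil together with Hensel's lemma, and the finitely many remaining places are handled by direct inspection. A general global field of characteristic $2$ is a finite separable extension of some $\F_q(t)$, and the same type of construction goes through intrinsically over $k$.

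The main obstacle is the global reciprocity step: proving that $\sum_v \inv_v \calA(P_v) = \tfrac12$ for every adelic point $(P_v) \in X(\Adeles_k)$. Poonen's original argument uses quadratic reciprocity; here one instead uses the reciprocity law for Artin-Schreier symbols over a global function field of characteristic $2$, which follows from the Hasse-Brauer-Noether exact sequence
$$0 \longrightarrow \Br k \longrightarrow \bigoplus_v \Br k_v \longrightarrow \Q/\Z \longrightarrow 0$$
applied to the global class $[a, c) \in \Br k$ for suitably chosen constants $c$. Once this reciprocity is in place, the combinatorial bookkeeping of \cite{poonen-chatelet} carries over and yields $X(\Adeles_k)^{\Br} = \emptyset$ while $X(\Adeles_k) \neq \emptyset$, which is the desired failure of the Hasse principle.
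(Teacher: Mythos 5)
Your overall strategy is indeed the paper's: replace $y^2-az^2$ by the separable form $y^2+yz+\gamma z^2$, take the class of the cyclic algebra attached to the Artin--Schreier extension and one quadratic factor of the right-hand side, and show that the local invariant is constant at each place with total sum $\tfrac12$. But the proposal stops exactly where the work begins. You never produce the element $a$ or the quadratics $f_1,f_2$; you only assert that their values ``can be forced into $\wp(k_v)$ or its complement at each place'' and that the ``combinatorial bookkeeping'' of Poonen carries over. It does not carry over formally: making $\inv_v\circ\ev_\calA$ constant on all of $X(k_v)$ simultaneously with local solvability at every place is the entire content of the theorem (the paper says as much: the difficulty lies in finding suitable equations). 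The actual construction uses several devices with no analogue in your outline: $\gamma$ is taken in the constant field $\F$ with $T^2+T+\gamma$ irreducible, so that $L=k[T]/(T^2+T+\gamma)$ is everywhere unramified and splits at $v$ exactly when $\deg v$ is even, reducing every invariant computation to a parity of a valuation; one then chooses (via Chebotarev) primes $a$ of even degree and $b$ of odd degree with $a\equiv\gamma\pmod{b^2\OO_{k,S}}$, a distinguished odd-degree prime $\pp$, and sets $f=a^{-4n}bx^2+x+ab^{-1}$, $g=a^{-4n}bf+\gamma$. The identity $g-a^{-4n}bf=\gamma$, a unit, is what kills the invariant at all places away from $a$, $b$, $\pp$; the congruence on $a$ and the degree parities force the invariant to be $\tfrac12$ at $v_b$ and $0$ at $v_\pp$ and $v_a$. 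None of this is recoverable from the outline as written.

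There is also a misdiagnosis of where the difficulty sits: you name the ``global reciprocity step'' $\sum_v\inv_v\calA(P_v)=\tfrac12$ as the main obstacle and propose to handle it with the Hasse--Brauer--Noether sequence. The paper needs no reciprocity law for that sum: it computes each local invariant directly and finds it equal to $\tfrac12$ at the single place $v_b$ and $0$ everywhere else, so the sum is $\tfrac12$ by inspection. Exactness of the Hasse--Brauer--Noether sequence enters only in the standard containment $X(k)\subseteq X(\mathbb{A}_k)^{\Br}$. The genuine obstacle is precisely the step you deferred: engineering the equations so that each local evaluation map is constant and computable while keeping $X(\mathbb{A}_k)\neq\emptyset$.
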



The only assumption on characteristic in~\cite{poonen-insufficiency} is in using~\cite[Prop 5.1]{poonen-chatelet} (all other arguments go through exactly as stated after replacing any polynomial of the form $by^2 +az^2$ by its Artin-Schreier analogue, $by^2+byz+az^2$).  Therefore, Theorem~\ref{thm:HPcounterex} extends the main result of~\cite{poonen-insufficiency} to global fields of characteristic $2$, thereby showing that the \'etale-Brauer obstruction is insufficient to explain all failures of the Hasse principle over a global field of any characteristic.

The proof of Theorem~\ref{thm:HPcounterex} is constructive.  The difficulty in the proof lies in finding suitable equations so that the Brauer set is easy to compute and empty.

\section{Background}%

\subsection{Brauer-Manin obstructions}%
The counterexamples to the Hasse principle referred to in Theorem~\ref{thm:HPcounterex} are all explained by the Brauer-Manin obstruction, which we recall here~\cite[Thm. 1]{manin-BMobs}.   Let $k$ be a global field and let $\mathbb{A}_k$ be the ad\`ele ring of $k$.  Recall that for a projective variety $X$, we have the equality $X(\mathbb A_k) = \prod _v X(k_v)$, where $v$ runs over all nontrivial places of $k$. The Brauer group of $X$, denoted $\Br X$, is the group of equivalence classes of Azumaya algebras on $X$.  Let $\inv_v$ denote the map $\Br \Q_v \to \Q/\Z$. Define
\[
	X(\mathbb A_k)^{\Br} := \left \{ (P_v)_v \in X(\mathbb A_k) \colon \sum_v \inv_v\left(\ev_{\mathcal A}(P_v)\right) = 0 \textrm{ for all } \mathcal A \in \Br X\right\}
\]
By class field theory we have
\[
	X(k) \subseteq X(\mathbb A_k)^{\Br} \subseteq X(\mathbb A_k).
\]
Thus, if $X(\mathbb A_k)^{\Br}=\emptyset$, then $X$ has no $k$-points.  We say there is a \defi{Brauer-Manin obstruction to the Hasse principle} if $X(\mathbb A_k) \neq \emptyset$ but $X(\mathbb A_k)^{\Br} = \emptyset$.  See~\cite[\S 5.2]{skorobogatov-torsors} for more details.

\subsection{Ch\^atelet surfaces in characteristic $2$}%
A conic bundle $X$ over $\mathbb P^1$ is the zero-locus of a nowhere-vanishing global section $s$ of $\Sym^2(\mathcal E)$ in $\mathbb P\mathcal E$, for some rank $3$ vector sheaf $\mathcal E$ on $\mathbb P^1$.  Consider the special case where $\mathcal E = \mathcal O \oplus \mathcal O\oplus \mathcal O(2)$ and $s = s_1 -s_2$ where $s_1$ is a global section of $\Sym^2(\mathcal O \oplus\mathcal O)$ and $s_2$ is a global section of $\mathcal O(2)^{\otimes 2} = \mathcal O(4)$.  Take $a\in k^{\times}$ and $P(x)$ a separable polynomial over $k$ of degree $3$ or $4$.  If $s_1 = y^2+yz+az^2$ and $s_2 = w^4P(x/w)$, then $X$ contains the affine variety defined by $y^2+yz+az^2 = P(x)$ as an open subset.  In this case we say $X$ is the Ch\^atelet surface defined by 
\[
	y^2+yz+az^2=P(x).
\]
By the same basic argument used in~\cite[Lemma 3.1]{poonen-chatelet}, we can show that $X$ is smooth.  See~\cite[\S 3 and \S 5]{poonen-chatelet} for the construction of a Ch\^atelet surface in the case where the characteristic is different from $2$.

\section{Proof of Theorem~\ref{thm:HPcounterex}}
Let $k$ denote a global field of characteristic $2$.  Let $\mathbb F$ denote its constant field and let $n$ denote the order of $\mathbb F^{\times}$.  Fix a prime $\pp$ of $k$ of odd degree and let $S = \{\pp\}$.  Let $\OO_{k,S}$ denote the ring of $S$-integers.  Let $\gamma \in \mathbb{F}$ be such that $T^2+T+\gamma$ is irreducible in $\mathbb{F}[T]$.  By the Chebotarev density theorem~\cite[Thm 13.4, p. 545]{neukirch-ant}, we can find elements $a,b \in \mathcal{O}_{k,S}$ that generate prime ideals of even and odd degree, respectively, such that $a \equiv \gamma \pmod{b^2\mathcal O_{k,S}}$.  These conditions imply that $v_\pp(a)$ is even and negative and that $v_\pp(b)$ is odd and negative.

Define
\begin{eqnarray*}
	f(x) & = & a^{-4n}bx^2 + x + ab^{-1},\\
	g(x) & = & a^{-8n}b^2x^2+ a^{-4n}bx + a^{1-4n} + \gamma.
\end{eqnarray*}
Note that $g(x) = a^{-4n}bf(x) + \gamma.$  Let $X$ be the Ch\^atelet surface given by
\begin{equation*}\tag{$*$}\label{eq:chatelet}
	y^2 + yz + \gamma z^2 = f(x)g(x).
\end{equation*}
In Lemma~\ref{lem:local-solvability} we show $X(\mathbb{A}_k) \neq \emptyset$, and in Lemma~\ref{lem:inv-map} we show $X(\mathbb A_k)^{\Br} = \emptyset$.  Together, these show that $X$ has a Brauer-Manin obstruction to the Hasse principle.

\begin{lemma}\label{lem:local-solvability}
	The Ch\^atelet surface $X$ has a $k_v$-point for every place $v$.
\end{lemma}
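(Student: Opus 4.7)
The plan is to show $X(k_v) \neq \emptyset$ for every place $v$.  Let $\alpha$ be a root of $T^2 + T + \gamma$ and set $K = k(\alpha)$; then $y^2 + yz + \gamma z^2 = N_{K/k}(y + z\alpha)$, and since $\gamma$ lies in the constant field $\mathbb{F}$, the completion $K_v/k_v$ is either trivial or an unramified quadratic extension.  Producing a $k_v$-point on the affine chart reduces to finding $x_v \in k_v$ with $f(x_v)g(x_v) \in N_{K_v/k_v}(K_v^\times) \cup \{0\}$; alternatively, the fiber of $X$ over $\infty \in \mathbb{P}^1$ is the conic $y^2 + yz + \gamma z^2 = a^{-12n}b^3$, with right-hand side equal to the leading coefficient of $fg$.

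I partition the places into four cases.  First, if $[\mathbb{F}_v : \mathbb{F}]$ is even then $T^2 + T + \gamma$ splits over $\mathbb{F}_v$ and hence over $k_v$ by Hensel's lemma, the conic is isotropic, and any $x_v$ (for instance $x_v = 0$) gives a point.  From now on assume $[\mathbb{F}_v : \mathbb{F}]$ is odd, so $K_v/k_v$ is unramified of degree $2$ and $N(K_v^\times) = \{u \in k_v^\times : v(u) \in 2\mathbb{Z}\}$.  Second, if furthermore $v \notin \{\pp, \mathfrak{b}\}$ (where $\mathfrak{b} = (b)$), then $a$ and $b$ are $v$-units, so $a^{-12n}b^3$ is a $v$-unit and hence a norm, and the fiber of $X$ over $\infty$ yields a $k_v$-point.

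Third, at $v = \mathfrak{b}$ the leading coefficient has $v_\mathfrak{b}$-valuation $3$, so the infinity-fiber argument fails; I instead arrange $g(x_\mathfrak{b}) = 0$.  The substitution $x = a^{4n}b^{-1}u$ converts $g(x) = 0$ into the Artin--Schreier equation $u^2 + u = a^{1-4n} + \gamma$.  Combining $a \equiv \gamma \pmod{b^2 \OO_{k,S}}$ with $\gamma^n = 1$ (from $n = |\mathbb{F}^\times|$), a short computation yields $a^{-4n} \equiv 1 \pmod{\mathfrak{m}_\mathfrak{b}^8}$ and hence $a^{1-4n} + \gamma \equiv a + \gamma \equiv 0 \pmod{\mathfrak{m}_\mathfrak{b}^2}$.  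Since Artin--Schreier is a bijection of $\mathfrak{m}_\mathfrak{b}$ onto itself (apply Hensel's lemma to $Y^2 + Y - c$), the equation admits a solution $u_0 \in \mathfrak{m}_\mathfrak{b}$, giving the point $(a^{4n}b^{-1}u_0, 0, 0) \in X(k_\mathfrak{b})$.

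The fourth case, $v = \pp$, is the main obstacle.  Let $A = -v_\pp(a) > 0$ (even) and $B = -v_\pp(b) > 0$ (odd).  For $x_\pp \in k_\pp$ with $v_\pp(x_\pp) = s$, the three terms of $f(x) = a^{-4n}bx^2 + x + ab^{-1}$ have $\pp$-valuations $(4nA - B) + 2s$, $s$, and $B - A$, of parities odd, $s \bmod 2$, and odd.  I choose $s$ even and strictly inside the open interval $(B - 4nA,\, B - A)$, which has length $(4n - 1)A \geq 6$ and two odd endpoints; such an $s$ exists.  The linear term then strictly dominates in $f$, giving $v_\pp(f(x_\pp)) = s$, and $v_\pp(a^{-4n}bf(x_\pp)) = (4nA - B) + s > 0$ forces $v_\pp(g(x_\pp)) = v_\pp(\gamma) = 0$.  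Hence $v_\pp(fg(x_\pp)) = s$ is even, so $fg(x_\pp)$ is a norm and the conic admits a $k_\pp$-solution.  The delicate parity juggling at $\pp$ is the main technical difficulty.
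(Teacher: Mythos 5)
Your proof is correct. It follows the same global strategy as the paper's --- treat the places one at a time, using that $y^2+yz+\gamma z^2$ is the norm form of an at-worst-unramified quadratic extension, so that a $k_v$-point exists once $v(f(x)g(x))$ is even (or the form splits, or the right side vanishes) --- but two of your four local arguments are genuinely different. At places of even residue degree you split the form; the paper does this only at $v_a$, the one such place where its generic argument breaks down. At the remaining places away from $\pp$ and $v_b$ you use the fibre over $\infty$, whose class $a^{-12n}b^3$ is a $v$-unit and hence a norm, where the paper takes $v(x)=-1$ and lets the quartic term dominate; these are the same mechanism viewed projectively versus affinely. At $\pp$ your parity-interval argument subsumes the paper's: the explicit choice $x=\pi a^2/b$ made there has $v_\pp(x)=1-2A+B$, which is exactly one of the even integers in your interval $(B-4nA,\,B-A)$, and your observation that $s>B-4nA$ forces $v_\pp(a^{-4n}bf)>0$, hence $v_\pp(g)=0$, is the same cancellation the paper exploits. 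The genuinely different step is at $v_b$: the paper balances valuations, choosing $x=1/b+1$ so that $v(f(x))=-1$ and $v(g(x))=+1$ and the product has even valuation, whereas you solve $g(x)=0$ outright via Hensel's lemma applied to the Artin--Schreier equation $u^2+u=a^{1-4n}+\gamma$ (whose right-hand side you correctly place in $\mathfrak{m}_{v_b}$ using $a\equiv\gamma\pmod{b^2\OO_{k,S}}$ and $\gamma^n=1$), landing on the point $(x_0,0,0)$ of a degenerate fibre. Your variant is arguably cleaner at $v_b$ and more flexible at $\pp$, at the cost of invoking the projective model at the generic places; the paper's is more uniform in that every case is settled by a single explicit substitution into $f$ and $g$.
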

\begin{proof}
	Suppose that $v = v_a$.  Since $ a$ generates a prime of even degree, the left-hand side of~(\ref{eq:chatelet}) factors in $k_v[y,z]$.  Therefore, there is a solution over $k_v$.
	
	Now suppose that $v\neq v_a$.  Since $y^2+yz +az^2$ is a norm form for an unramified extension of $k_v$ for all $v$, in order to prove the existence of a $k_v$-point, it suffices to find an $x \in k_v$ such that the valuation of the right-hand side of~(\ref{eq:chatelet}) is even.
	
	Suppose further that $v \neq v_\pp, v_b$.  Choose $x$ such that $v(x) = -1.$  Then the right-hand side of~(\ref{eq:chatelet}) has valuation $-4$ so there exists a $k_v$-point.
	
	Suppose that $v = v_{\pp}$.  Let $\pi$ be a uniformizer for $v$ and take $x = \pi a^2/b.$  Then
	\begin{eqnarray*}
		f(x) & = & b^{-1}a^{4-4n}\pi^2 + a^2b^{-1}\pi + ab^{-1}.
	\end{eqnarray*}
	Since $a$ has negative even valuation and $n \ge 1$,  we have $v(f(x)) = v(a^2b^{-1}\pi)$ which is even.  Now let us consider
	\begin{eqnarray*}
		g(x) & = & a^{4-8n}\pi^2 + a^{2-4n}\pi + a^{1-4n}+\gamma.
	\end{eqnarray*}
	By the same conditions mentioned above, all terms except for $\gamma$ have positive valuation.  Therefore $v(g(x)) = 0$.
	
	Finally suppose that $v = v_{b}$.  Take $x = \frac{1}{b} +1$.  Then
	\[
		f(x) = \frac{1}{b}\left( a^{-4n}+a+1 + b + a^{-4n}b^2\right). 
	\]
	Note that by the conditions imposed on $a$, $\left( a^{-4n}+a+1 + b + a^{-4n}b^2\right) \equiv \gamma + b \pmod{b^2 \mathcal O_{k,S}}$.  Thus $v(f(x))=-1.$  Now consider
	\[
		g(x) = a^{-8n} + a^{-8n}b^2+ a^{-4n} + a^{-4n}b + a^{1-4n} + \gamma
	\]
	modulo $b^2\mathcal O_{k,S}$.  By the conditions imposed on $a$, we have 
\[
	g(x) \equiv 1+1+b+\gamma+\gamma \equiv b\pmod{b^2\mathcal O_{k,S}}.
\]
	Thus $v(g(x))=1$, so $v\left(f(x)g(x)\right)$ is even.
\end{proof}	

Let $L = k[T]/(T^2+T+\gamma)$ and let $\mathcal A$ denote the class of the cyclic algebra $\left(L/k, f(x)\right)_2$ in $\Br k(X)$.  Using the defining equation of the surface, we can show that $\left(L/k, g(x)\right)_2$ is also a representative for $\mathcal A$.  Since $g(x) + a^{-4n}bf(x)$ is a $v$-adic unit, $g(x)$ and $f(x)$ have no common zeroes.  Since $\mathcal A$ is the class of a cyclic algebra of order $2$, the algebra $\left(L/k, f(x)/x^2\right)_2$ is another representative for $\mathcal A$.  Note that for any point $p$ of $X$, there exists an open neighborhood $U$ containing $P$ such that either $f(x)$, $g(x)$, or $f(x)/x^2$ is a nowhere vanishing regular function on $U$.  Therefore, $\mathcal A$ is an element of $\Br X$.

To show that $X(\mathbb A_k)^{\Br} = \emptyset$, we use the continuity of the map $\ev_{\mathcal A}$.  While the result is well-known, it is difficult to find in the literature so we give a proof for reader's convenience.

\begin{lemma}\label{lem:ev-cont}
	Let $k_v$ be a local field and let $V$ be a smooth projective scheme over $k_v$.  For any $[\mathcal B] \in \Br_{\Az} V$,
	\[
		\ev_{\mathcal B} : V(k_v) \to \Br k_v
	\]
	is continuous for the discrete topology on $\Br k_v$.
\end{lemma}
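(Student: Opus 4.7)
The plan is to show $\ev_{\mathcal B}$ is locally constant. Fix $P \in V(k_v)$, let $A_0$ be a central simple $k_v$-algebra with $[A_0] = \ev_{\mathcal B}(P) \in \Br k_v$, and form the Azumaya algebra $\mathcal C := \mathcal B \otimes_{k_v} A_0^{\mathrm{op}}$ on $V$ (where $A_0^{\mathrm{op}}$ is pulled back along the structure morphism $V \to \Spec k_v$). For every $Q \in V(k_v)$, one has $\ev_{\mathcal C}(Q) = \ev_{\mathcal B}(Q) - [A_0]$ in $\Br k_v$, so in particular $\ev_{\mathcal C}(P) = 0$. It therefore suffices to prove that the set $U := \{Q \in V(k_v) : \ev_{\mathcal C}(Q) = 0\}$ is open in the $v$-adic topology; for then $\ev_{\mathcal B}$ is constantly $[A_0]$ on the neighborhood $U$ of $P$, which is what we want.

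To analyze $U$, I would pass to the Severi-Brauer scheme $\pi : Y \to V$ associated to $\mathcal C$: this is a smooth proper morphism, \'etale-locally a projective bundle $\PP^{m-1}_V$ where $m^2 = \rank(\mathcal C)$. For each $Q \in V(k_v)$, the fiber $Y_Q$ is a Severi-Brauer variety over $k_v$ whose class in $\Br k_v$ is $\ev_{\mathcal C}(Q)$, and by the classical splitting criterion a Severi-Brauer variety has trivial class if and only if it has a rational point. Hence $U = \pi(Y(k_v))$.

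Since $P \in U$, choose a $k_v$-point $y_0 \in Y_P(k_v)$. Because $\pi$ is smooth at $y_0$ of relative dimension $m-1$, one can select \'etale coordinates that realize $\pi$ locally near $y_0$ as the projection $V \times_{k_v} \Aff^{m-1} \to V$. Applying Hensel's lemma (equivalently, the $v$-adic implicit function theorem) then yields a $v$-adic open neighborhood $W$ of $P$ in $V(k_v)$ together with a continuous section $\sigma : W \to Y(k_v)$ of $\pi$ satisfying $\sigma(P) = y_0$. For every $Q \in W$, the point $\sigma(Q) \in Y_Q(k_v)$ witnesses that $Q \in U$, so $W \subseteq U$ and $U$ is open, completing the proof.

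The main technical input is the $v$-adic implicit function theorem for smooth morphisms; the remainder of the argument rests only on the equivalence between Azumaya algebras and Severi-Brauer schemes, together with the criterion that a Severi-Brauer variety splits exactly when it has a rational point. The essential point is that openness of the splitting locus follows directly from smoothness of $\pi$, so no genuinely new ingredient beyond classical $v$-adic analysis is required.
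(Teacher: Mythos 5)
Your proof is correct and follows essentially the same strategy as the paper: reduce to showing that the locus where the (suitably twisted) class vanishes is open, identify that locus as the image of the $k_v$-points of an auxiliary smooth scheme over $V$, and conclude by the $v$-adic implicit function theorem. The only cosmetic difference is that you use the Severi--Brauer scheme of $\mathcal C$ where the paper uses the associated $\PGL_n$-torsor; these carry the same information here, so the arguments are interchangeable.
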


\begin{proof}
	To prove continuity, it suffices to show that $\ev_{\mathcal B}^{-1}(\mathcal B')$ is open for any $\mathcal B'$ in the image of $\ev_{\mathcal B}$.  By replacing $[\mathcal B]$ with $[\mathcal B] - [\ev_{\mathcal B}(x)]$, we reduce to showing that $\ev_{\mathcal B}^{-1}(0)$ is open.  
	
	Fix a representative $\mathcal B$ of the element $[\mathcal B]\in \Br_{\Az} V$.  Let $n^2$ denote the rank of $\mathcal B$ and let $f_{\mathcal B}\colon Y_{\mathcal B} \to V$ be the $\PGL_n$-torsor associated to $\mathcal B$.  Then we observe that the set $\ev_{\mathcal B}^{-1}(0)$ is equal to $f_{\mathcal B}(Y_{\mathcal B}(k_v)) \subset V(k_v)$.  This set is open by the implicit function theorem.
\end{proof}

\begin{lemma}\label{lem:inv-map}
	Let $P_v \in X(k_v)$.  Then
	\[
	\inv_v(\ev_{\mathcal A}(P_v)) = 
		\begin{cases}
				1/2 & \text{if $v=v_b$},\\
				0 & \text{otherwise}.
		\end{cases}
	\]
	Therefore $X(\mathbb A_k)^{\Br} = \emptyset$.
\end{lemma}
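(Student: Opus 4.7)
The plan is to compute $\inv_v(\ev_{\mathcal A}(P_v))$ place by place, exploiting that $L=k[T]/(T^2+T+\gamma)$ is the constant-field extension $k\F_{q^2}/k$ of $k$ (where $q=|\F|$). Since $L/k$ is everywhere unramified, $v$ splits in $L$ iff its residue degree $d_v$ is even, in which case $\mathcal{A}\otimes_k k_v$ is already trivial in $\Br k_v$ and $\inv_v=0$; this disposes of $v=v_a$ (even degree by construction) and every other even-degree place at once. At an odd-degree $v$ the extension $L_v/k_v$ is the unramified quadratic extension of $k_v$, and local class field theory identifies $\inv_v\bigl((L/k,\alpha)_2\bigr)$ with the parity class of $v(\alpha)$ in $\tfrac12\Z/\Z$. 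So the remaining task is to determine the parity of $v$ applied to whichever of $f(x_0)$, $g(x_0)$, $f(x_0)/x_0^2$ is a local unit near $P_v$.

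For odd-degree $v\notin\{v_b,v_\pp\}$, all coefficients of $f$ and of $g$ are $v$-units. I would split on $v(x_0)$: if $v(x_0)<0$ the leading term of $f$ dominates and $v(f(x_0))=2v(x_0)$ is even; if $v(x_0)\ge0$ and $v(f(x_0))=0$ it is trivially even; and if $v(x_0)\ge0$ with $v(f(x_0))>0$, the identity $g=a^{-4n}bf+\gamma$ forces $v(g(x_0))=0$, whence $v(f(x_0)g(x_0))=v(f(x_0))$ is even because $f(x_0)g(x_0)=y_0^2+y_0z_0+\gamma z_0^2$ is a norm from $L_v$. At $v=v_\pp$ I exploit the perfect-square presentation $g(x)=(a^{-4n}bx)^2+(a^{-4n}bx)+(a^{1-4n}+\gamma)$; since $v_\pp(a)<0$ forces $v_\pp(a^{1-4n})>0$, the constant term reduces to $\gamma$ modulo the uniformizer. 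Setting $u=a^{-4n}bx_0$, a short analysis on the sign of $v_\pp(u)$ shows $v_\pp(u^2+u+(a^{1-4n}+\gamma))$ is always even: it is $0$ when $v_\pp(u)\ge0$ (using that $T^2+T+\gamma$ is irreducible over the odd-degree residue field $\F_{v_\pp}$, so $\bar u^2+\bar u+\gamma\ne0$) and $2v_\pp(u)$ otherwise.

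The heart of the argument is $v=v_b$, where $v_b(f(x_0))$ must come out odd for every $k_{v_b}$-point. The valuations at $v_b$ of the coefficients $a^{-4n}b,\,1,\,ab^{-1}$ of $f$ are $1,\,0,\,-1$, so a Newton-polygon argument handles $v_b(x_0)\ne-1$ immediately, producing $v_b(f(x_0))\in\{-1,\,1+2v_b(x_0)\}$, always odd. The essential case is $v_b(x_0)=-1$: writing $x_0=t/b$ with $v_b(t)=0$ yields $bf(x_0)=a^{-4n}t^2+t+a$, which reduces modulo $b$ to $t^2+t+\gamma$ upon combining the congruence $a\equiv\gamma\pmod{b^2\mathcal{O}_{k,S}}$ with the identity $\gamma^n=1$ in $\F^\times$ (these together give $\overline{a^{-4n}}=1$). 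Irreducibility of $T^2+T+\gamma$ over the odd-degree field $\F_{v_b}$ then forces $bf(x_0)$ to be a $v_b$-unit, so once more $v_b(f(x_0))=-1$. The absence of $k_{v_b}$-points above $x=\infty$ is immediate from $v_b(a^{-12n}b^3)=3$ being odd. Summing, $\sum_v\inv_v(\ev_{\mathcal A}(P_v))=1/2\ne0$ for every adelic point of $X$, which gives $X(\mathbb{A}_k)^{\Br}=\emptyset$. The chief obstacle I expect is precisely the $v_b(x_0)=-1$ subcase, where all three terms of $f(x_0)$ share the valuation $-1$ and one genuinely needs the full strength of $a\equiv\gamma\pmod{b^2}$ together with the constant-field identity $\gamma^n=1$.
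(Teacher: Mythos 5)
Your proposal is correct and follows essentially the same route as the paper: the same identification of $\inv_v$ with the parity of the valuation via the unramified constant-field extension $L/k$, the same dominant-term/Newton-polygon analysis at each place, and the same use of $a\equiv\gamma\pmod{b^2\OO_{k,S}}$ together with $\gamma^{n}=1$ (hence $a^{-4n}\equiv 1$) and the irreducibility of $T^2+T+\gamma$ over odd-degree residue fields to settle the critical subcases $x_0=a^{4n}b^{-1}u$ at $v_\pp$ and $x_0=b^{-1}t$ at $v_b$. The one structural difference is how you pass from the affine chart to all of $X(k_v)$: the paper proves continuity of $\ev_{\calA}$ (Lemma~\ref{lem:ev-cont}) and computes only on the $v$-adically dense subset $X_0(k_v)$, whereas you compute directly at $x=\infty$ --- but you only do this at $v_b$. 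To make your route complete you would need to treat the fiber over $\infty$ at the remaining odd-degree places as well (this is easy: the representative $f(x)/x^2$ tends to the leading coefficient $a^{-4n}b$, a $v$-unit there, so $\inv_v=0$), or else fall back on the density argument. Two further small points: your claim that $v(f(x_0)g(x_0))=v(f(x_0))$ ``because the left-hand side is a norm'' breaks down at zeros of $f$, where one should simply switch to the representative $(L/k,g(x))_2$ as the paper does; and your Newton-polygon value $v_b(f(x_0))=1+2v_b(x_0)$ in the regime $v_b(x_0)<-1$ is correct but worth double-checking against the linear term, since dominance of the quadratic term requires exactly $v_b(x_0)<-1$. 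None of these affects the correctness of the overall argument or the conclusion $\sum_v\inv_v(\ev_{\calA}(P_v))=1/2$ for every adelic point.
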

\begin{proof}
	The surface $X$ contains an open affine subset that can be identified with
	\[
	V(y^2+yz+az^2 -P(x)) \subseteq \mathbb A^3.
	\]
	Let $X_0$ denote this open subset.  Since $\ev_{\mathcal A}$ is continuous by Lemma~\ref{lem:ev-cont} and $\inv_v$ is an isomorphism onto its image, it suffices to prove that $\inv_v$ takes the desired value on the $v$-adically dense subset $X_0(k_v)\subset X(k_v)$.  
	
	Since $L/k$ is an unramified extension for all places $v$, evaluating the invariant map reduces to computing the parity of the valuation of $f(x)$ or $g(x)$.  
	
	Suppose that $v\neq v_a,v_b, v_\pp$.  If $v(x_0) < 0$, then by the strong triangle inequality, $v(f(x_0))= v(x_0^2)$.  Now suppose that $v(x_0) \geq 0$.  Then both $f(x_0)$ and $g(x_0)$ are $v$-adic integers, but since $g(x) - a^{-4n}bf(x) = \gamma$ either $f(x_0)$ or $g(x_0)$ is a $v$-adic unit.  Thus, for all $P_v \in X_0(k_v)$, $\inv_v(\mathcal A(P_v))=0$.
	
	Suppose that $v = v_a$.  Since $a$ generates a prime of even degree, $T^2+T+\gamma$ splits in $k_a$.  Therefore, $(L/k, h)$ is trivial for any $h \in k_a(V)^\times$ and so $\inv_v(\mathcal A(P_v))=0$ for all $P_v \in X_0(k_v)$.
	
	Suppose that $v = v_\pp$.  We will use the representative $\left(L/k, g(x)\right)$ of $\mathcal A$.  If $v(x_0) < v(a^{4n}b^{-1})$ then the quadratic term of $g(x_0)$ has even valuation and dominates the other terms.  If $v(x_0) > v(a^{4n}b^{-1})$ then the constant term of $g(x_0)$ has even valuation and dominates the other terms.  Now assume that $x_0 = a^{4n}b^{-1}u$, where $u$ is $v$-adic unit.  Then we have
	\[
		g(x_0) = u^2 + u + \gamma + a^{1-4n}.
	\]
	Since $\gamma$ was chosen such that $T^2+T+\gamma$ is irreducible in $\mathbb{F}[T]$ and $\pp$ is a prime of odd degree, $T^2 + T + \gamma$ is irreducible in $\mathbb{F}_\pp[T]$.  Thus, for any $v$-adic unit $u$, $u^2+u+\gamma \not\equiv 0 \pmod \pp.$  Since $a \equiv 0 \mod \pp$, this shows $g(x_0)$ is a $v$-adic unit.  Hence $\inv_v(\mathcal A(P_v))=0$ for all $P_v \in X_0(k_v)$.
	
	Finally suppose that $v = v_b$.  We will use the representative $\left(L/k, f(x)\right)$ of $\mathcal A$.  If $v(x_0) < -1$ then the quadratic term has odd valuation and dominates the other terms in $f(x_0)$.  If $v(x_0) > -1$ then the constant term has odd valuation and dominates the other terms in $f(x_0)$.  Now assume $x_0 = b^{-1}u$ where $u$ is any $v$-adic unit.  Then we have
	\[
		f(x_0) = \frac{1}{b}\left ( a^{-4n}u^2 + u + a\right).
	\]
	It suffices to show that $a^{-4n}u^2 + u + a \not \equiv 0 \pmod{ b\mathcal O_{k,S}}.$  Since $a \equiv \gamma \pmod{b\mathcal O_{k,S}},$ we have 
\[
	a^{-4n}u^2 + u + a \equiv \overline{u}^2 + \overline{u} +\gamma.
\]
	Using the same argument as in the previous case, we see that $a^{-4n}u^2 + u + a \not \equiv 0 \pmod{b\mathcal O_{k,s}}$ and thus $v(g(x_0))=-1$.  Therefore $\inv_v(\mathcal A(P_v))=\frac{1}{2}$ for all $P_v \in X_0(k_v)$.
\end{proof}

\section*{Acknowledgements}
	I thank my advisor, Bjorn Poonen, for suggesting the problem and for many helpful conversations.  I thank Olivier Wittenberg for a sketch of the proof of Lemma~\ref{lem:ev-cont} and Daniel Erman for comments improving the exposition.

\begin{bibdiv}
	\begin{biblist}

		\bib{manin-BMobs}{article}{
		   author={Manin, Y. I.},
		   title={Le groupe de Brauer-Grothendieck en g\'eom\'etrie diophantienne},
		   conference={
		      title={Actes du Congr\`es International des Math\'ematiciens},
		      address={Nice},
		      date={1970},
		   },
		   book={
		      publisher={Gauthier-Villars},
		      place={Paris},
		   },
		   date={1971},
		   pages={401--411},
		   review={\MR{0427322 (55 \#356)}},
		}
		
		\bib{neukirch-ant}{book}{
		   author={Neukirch, J{\"u}rgen},
		   title={Algebraic number theory},
		   series={Grundlehren der Mathematischen Wissenschaften [Fundamental
		   Principles of Mathematical Sciences]},
		   volume={322},
		   note={Translated from the 1992 German original and with a note by Norbert
		   Schappacher;
		   With a foreword by G. Harder},
		   publisher={Springer-Verlag},
		   place={Berlin},
		   date={1999},
		   pages={xviii+571},
		   isbn={3-540-65399-6},
		   review={\MR{1697859 (2000m:11104)}},
		}

		\bib{poonen-chatelet}{misc}{
  		author={Poonen, Bjorn},
  		title={Existence of rational points on smooth projective varieties},
  		date={2008-06-04},
  		note={Preprint, to appear in {\em J.\ Europ.\ Math.\ Soc}},
		}

		\bib{poonen-insufficiency}{misc}{
  		author={Poonen, Bjorn},
  		title={Insufficiency of the Brauer-Manin obstruction applied to \'etale covers},
  		date={2008-08-10},
  		note={Preprint},
		}

		\bib{skorobogatov-torsors}{book}{
    	author={Skorobogatov, Alexei},
     	title={Torsors and rational points},
    	series={Cambridge Tracts in Mathematics},
    	volume={144},
 		publisher={Cambridge University Press},
     	place={Cambridge},
      	date={2001},
     	pages={viii+187},
      	isbn={0-521-80237-7},
    	review={MR1845760 (2002d:14032)},
		}

	\end{biblist}
\end{bibdiv}

\end{document}